%
\documentclass[12pt, reqno]{amsart}
\usepackage{amsmath, amsthm, amscd, amsfonts, amssymb, graphicx, color}
\usepackage{enumerate}
\usepackage[bookmarksnumbered, colorlinks, plainpages]{hyperref}
\hypersetup{colorlinks=true,linkcolor=red, anchorcolor=green, citecolor=cyan, urlcolor=red, filecolor=magenta, pdftoolbar=true}

\textheight 22.truecm \textwidth 13truecm
\setlength{\oddsidemargin}{0.65in}\setlength{\evensidemargin}{0.65in}

\setlength{\topmargin}{-.55cm}

\newtheorem{theorem}{Theorem}[section]
\newtheorem{lemma}[theorem]{Lemma}
\newtheorem{proposition}[theorem]{Proposition}

\theoremstyle{definition}
\newtheorem{definition}[theorem]{Definition}
\newtheorem{example}[theorem]{Example}

\theoremstyle{remark}
\newtheorem{remark}[theorem]{Remark}
\numberwithin{equation}{section}

\newcommand{\tref}[1]{Theorem~\textup{\ref{#1}}}

\newcommand{\cref}[1]{Corollary~\textup{\ref{#1}}}
\newcommand{\lref}[1]{Lemma~\textup{\ref{#1}}}
\newcommand{\rref}[1]{Remark~\textup{\ref{#1}}}

\newenvironment{psmallmatrix}{\left(\begin{smallmatrix}}{\end{smallmatrix}\right)}
\begin{document}
\setcounter{page}{1}

\title[Symmetric N.Ineq.]{On Symmetric Norm Inequalities And Hermitian Block-Matrices }
\author[A.Mhanna]{Antoine Mhanna$^1$$^{*}$}

\address{$^{1}$ Department of Mathematics, Lebanese University  Hadath, Beirut, Lebanon}
\email{\textcolor[rgb]{0.00,0.00,0.84}{tmhanat@yahoo.com}}


\keywords{Symmetric norms,
 Hermitian Block-matrices .}

\date{21 November 2015
\newline \indent $^{*}$ Corresponding author}

\begin{abstract}The main purpose of this paper  is to englobe some  new and known types of Hermitian block-matrices $M=\begin{pmatrix} A & X\\ {X^*} & B\end{pmatrix}$  satisfying or not the inequality $\|M\|\le \|A+B\|$ for all symmetric norms.
\end{abstract}

\maketitle

\section{Introduction and preliminaries}
The first section will deal with some known results on the inequality and some preliminaries we  used  in the second section to derive some new  generalization results. For positive semi-definite block-matrix  $M,$ we say that $M$ is P.S.D. and  we write  $M=\begin{pmatrix} A & X\\ {X^*} & B\end{pmatrix} \in {\mathbb{M}}_{n+m}^+$, with $A\in {\mathbb{M}}_n^+$, $B \in {\mathbb{M}}_m^+.$ 
Let $A$ be an $n\times n$ matrix  and  $F$ an  $m\times m$ matrix, $(m>n)$ written by blocks such that  $A$ is a diagonal block and all entries other than those of $A$ are zeros, then the two matrices have the same  singular values and  for all unitarily invariant norms $\|A\|=\|F\|=\|A\oplus0\|$, we say then that the  symmetric norm on ${\mathbb{M}}_{m}$ induces a symmetric norm on ${\mathbb{M}}_{n}$, so for square matrices  we may assume that our norms are defined on all spaces ${\mathbb{M}}_n,$           $  n\ge 1.$
The spectral norm is denoted by ${\|.\|}_s,$ the Frobenius norm by ${\|.\|}_{(2)},$ and the Ky Fan $k-$norms by ${\|.\|}_{k}.$
Let  $Im(X):=\dfrac{X-X^*}{2i}$ respectively $Re(X)=\dfrac{X+X^*}{2}$ be  the imaginary part respectively the real part of a matrix $X$ and let ${\mathbb{M}}_n^+$ denote the set of positive and semi-definite part of the space of $n\times n$ complex matrices and $M$  be any positive semi-definite block-matrices; that is, 
$M=\begin{bmatrix} A & X\\ {X^*} & B\end{bmatrix} \in {\mathbb{M}}_{n+m}^+$, with $A\in {\mathbb{M}}_n^+$, $B \in {\mathbb{M}}_m^+.$
\begin{lemma}\label{la}\cite{1}
For every matrix in ${\mathbb{M}}_{2n}^+ $ written in blocks of the same size, we have the  decomposition: 
$$\begin{pmatrix} A & X\\ {X^*} & B\end{pmatrix}=U\begin{pmatrix} \frac{A+B}{2}+Im(X) & 0\\ {0} & 0 \end{pmatrix}U^*+V\begin{pmatrix} 0 & 0\\ {0} &\frac{A+B}{2}-Im(X) \end{pmatrix}V^*$$
for some unitaries $U, V\in {\mathbb{M}}_{2n}.$
\end{lemma}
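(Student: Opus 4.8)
The plan is to first conjugate $M$ by a fixed unitary so that its two diagonal blocks become exactly $\frac{A+B}{2}+Im(X)$ and $\frac{A+B}{2}-Im(X)$, and then to absorb the resulting off-diagonal part using a decomposition that holds for \emph{every} positive block-matrix.

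For the first step I would take $W=\dfrac{1}{\sqrt2}\begin{pmatrix} I_n & iI_n\\ iI_n & I_n\end{pmatrix}\in\mathbb{M}_{2n}$, which is unitary, and compute $WMW^*$ blockwise. Using $X-X^*=2i\,Im(X)$ one finds
$$WMW^*=\begin{pmatrix}\ \dfrac{A+B}{2}+Im(X)\ & Y\\[2mm] Y^* & \ \dfrac{A+B}{2}-Im(X)\ \end{pmatrix}=:N$$
for some $Y\in\mathbb{M}_n$ whose precise value is irrelevant. Since $W$ is unitary, $N\in\mathbb{M}_{2n}^+$, and in particular its diagonal blocks $P:=\frac{A+B}{2}+Im(X)$ and $Q:=\frac{A+B}{2}-Im(X)$ are positive semi-definite, as diagonal blocks of the positive matrix $N$.

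The core of the argument would then be the following general fact: every $N=\begin{pmatrix} P & Y\\ Y^* & Q\end{pmatrix}\in\mathbb{M}_{2n}^+$ with $n\times n$ blocks can be written $N=U(P\oplus 0_n)U^*+V(0_n\oplus Q)V^*$ for suitable unitaries $U,V\in\mathbb{M}_{2n}$. To prove this I would factor $N=CC^*$ with $C:=N^{1/2}$ (Hermitian), split $C=(C_1\mid C_2)$ into its first and last $n$ columns, so that $C_1,C_2$ are $2n\times n$ matrices, and observe that on the one hand $N=CC^*=C_1C_1^*+C_2C_2^*$, while block-expanding $N=C^*C$ yields $P=C_1^*C_1$ and $Q=C_2^*C_2$. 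Now $C_1C_1^*$ is a positive $2n\times 2n$ matrix of rank $\le n$ whose non-zero eigenvalues, with multiplicities, are those of $C_1^*C_1=P$; hence its eigenvalue list is that of $P$ together with $2n-\rank(P)\ge n$ zeros, which is exactly the eigenvalue list of $P\oplus 0_n$. Since two Hermitian matrices of the same size with the same spectrum are unitarily equivalent, $C_1C_1^*=U(P\oplus 0_n)U^*$ for some unitary $U$, and similarly $C_2C_2^*=V(0_n\oplus Q)V^*$; adding these two identities proves the claim.

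Finally I would apply this fact to the matrix $N$ produced in the first step, getting unitaries $U',V'\in\mathbb{M}_{2n}$ with
$$N=U'\begin{pmatrix}\tfrac{A+B}{2}+Im(X) & 0\\ 0 & 0\end{pmatrix}U'^{*}+V'\begin{pmatrix}0 & 0\\ 0 & \tfrac{A+B}{2}-Im(X)\end{pmatrix}V'^{*},$$
and then conjugate back: since $M=W^*NW$, the matrices $U:=W^*U'$ and $V:=W^*V'$ are unitary and give precisely the asserted decomposition of $M$. I expect the one real obstacle to be the general fact above — the rest is block bookkeeping — and the subtle point inside it is that the factorisation must be chosen so that $CC^*=N$, not merely $C^*C=N$, which is exactly why one should take $C=N^{1/2}$ Hermitian.
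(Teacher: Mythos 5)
Your proof is correct, and it is essentially the argument of Bourin, Lee and Lin in the cited reference \cite{1}, which the paper invokes without reproducing: conjugate $M$ by a fixed unitary to bring the blocks $\frac{A+B}{2}\pm Im(X)$ onto the diagonal, then apply the general decomposition $N=U(P\oplus 0)U^*+V(0\oplus Q)V^*$ for $N=\begin{pmatrix} P & Y\\ Y^* & Q\end{pmatrix}\in\mathbb{M}_{2n}^+$, proved exactly as you do via $N^{1/2}=(C_1\mid C_2)$ and the unitary equivalence of $C_1C_1^*$ with $C_1^*C_1\oplus 0$. Your care in taking $C=N^{1/2}$ Hermitian so that both $CC^*=N$ and $C^*C=N$ can be used is precisely the point on which the standard proof rests, so there is nothing to correct.
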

\begin{lemma}\label{lk}\cite{1}
For every matrix in ${\mathbb{M}}_{2n}^+ $ written in blocks of the same size, we have the  decomposition: 
$$\begin{pmatrix} A & X\\ {X^*} & B\end{pmatrix}=U\begin{pmatrix} \frac{A+B}{2}+Re(X) & 0\\ {0} & 0 \end{pmatrix}U^*+V\begin{pmatrix} 0 & 0\\ {0} &\frac{A+B}{2}-Re(X) \end{pmatrix}V^*$$
for some unitaries $U, V\in {\mathbb{M}}_{2n}.$
\end{lemma}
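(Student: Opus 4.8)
The plan is to deduce \lref{lk} from \lref{la} by a single diagonal unitary conjugation that converts the real part of the off-diagonal block into an imaginary part. Write $M=\begin{pmatrix} A & X\\ X^* & B\end{pmatrix}\in{\mathbb{M}}_{2n}^+$ with $A,B\in{\mathbb{M}}_n^+$, and put $W=\begin{pmatrix} I_n & 0\\ 0 & iI_n\end{pmatrix}\in{\mathbb{M}}_{2n}$, which is unitary.

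First I would compute the unitary congruence $W^*MW$ by a direct block multiplication, obtaining $W^*MW=\begin{pmatrix} A & iX\\ -iX^* & B\end{pmatrix}$. Since $W$ is unitary, this matrix is again positive semi-definite, and its off-diagonal block $iX$ has adjoint $-iX^*$, so it is of precisely the form to which \lref{la} applies. The one computation worth recording is that the imaginary part of this new off-diagonal block equals the real part of the old one:
$$Im(iX)=\frac{iX-(iX)^*}{2i}=\frac{iX+iX^*}{2i}=\frac{X+X^*}{2}=Re(X).$$

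Next I would apply \lref{la} to $W^*MW$: it provides unitaries $U_0,V_0\in{\mathbb{M}}_{2n}$ with
$$W^*MW=U_0\begin{pmatrix} \frac{A+B}{2}+Re(X) & 0\\ 0 & 0\end{pmatrix}U_0^*+V_0\begin{pmatrix} 0 & 0\\ 0 & \frac{A+B}{2}-Re(X)\end{pmatrix}V_0^*.$$
Finally, conjugating this identity by $W$ on the left and $W^*$ on the right gives $M$ on the left-hand side (because $WW^*=I$) and, on the right, replaces $U_0,V_0$ by $U:=WU_0$ and $V:=WV_0$, which are again unitary; this is exactly the asserted decomposition.

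I do not anticipate a real obstacle: the argument is a short reduction, and the only places to be careful are the bookkeeping in the product $W^*MW$ and the signs in the identity $Im(iX)=Re(X)$. Should a self-contained proof be preferred, one could instead repeat the proof of \lref{la} verbatim with $Re(X)$ in place of $Im(X)$, but the conjugation trick is cleaner and exhibits the duality between the two lemmas.
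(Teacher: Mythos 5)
Your argument is correct: $W=\begin{pmatrix} I_n & 0\\ 0 & iI_n\end{pmatrix}$ is unitary, $W^*MW=\begin{pmatrix} A & iX\\ -iX^* & B\end{pmatrix}$ is again positive semi-definite, the identity $Im(iX)=Re(X)$ is right, and conjugating the decomposition of $W^*MW$ back by $W$ absorbs $W$ into the unitaries, so \lref{lk} does follow from \lref{la} exactly as you describe. Note, however, that the paper itself gives no proof of either lemma --- both are quoted from the reference of Bourin, Lee and Lin --- and there the two statements are proved directly and in parallel: one conjugates $M$ by a rotation-type unitary such as $\frac{1}{\sqrt{2}}\begin{pmatrix} I & I\\ -I & I\end{pmatrix}$ (with suitable factors of $i$ for the $Im(X)$ version) so that the diagonal blocks become $\frac{A+B}{2}\pm Re(X)$, and then applies the standard fact that a positive semi-definite block matrix is a sum of two positive semi-definite matrices unitarily congruent to its diagonal blocks padded with zeros; this direct route is what justifies \rref{pl}, since it exhibits $\frac{A+B}{2}\pm Re(X)$ as diagonal blocks of a positive semi-definite matrix. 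Your reduction is shorter and makes the duality between the two lemmas transparent, at the cost of taking \lref{la} as given rather than producing a self-contained proof; either is acceptable, and your closing remark that one could instead repeat the proof of \lref{la} with $Re(X)$ in place of $Im(X)$ is essentially what the cited source does.
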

\begin{remark}\label{pl}
The proofs of \lref{la} respectively \lref{lk} suggests that we have ${A+B}\ge -\dfrac{(X-X^*)}{i}$ and ${A+B}\ge \dfrac{(X-X^*)}{i},$ respectively ${A+B}\ge -(X+X^*)$ and ${A+B}\ge (X+X^*).$
\end{remark}
\begin{lemma}\cite{4}\label{sinj}
Let $M=\begin{bmatrix}A&B\\C&D\end{bmatrix}$ be any square matrix written by blocks of same size, if $AC=CA$ then $\det(M)=\det(AD-CB).$
\end{lemma}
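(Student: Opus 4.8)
\emph{Proof proposal.} The plan is to first dispose of the case where $A$ is invertible by a Schur-complement factorization, and then remove the invertibility hypothesis by a polynomial perturbation argument. Assume first that $A$ is invertible. Then one has the block factorization
$$\begin{pmatrix} A & B\\ C & D\end{pmatrix}=\begin{pmatrix} I & 0\\ CA^{-1} & I\end{pmatrix}\begin{pmatrix} A & B\\ 0 & D-CA^{-1}B\end{pmatrix},$$
which is verified by multiplying out the right-hand side (the off-diagonal blocks reproduce $B$ and $C$, and the $(2,2)$ block is $CA^{-1}B+(D-CA^{-1}B)=D$). The first factor is block lower-triangular with identity diagonal blocks, so it has determinant $1$; the second factor is block upper-triangular, so its determinant is $\det(A)\det(D-CA^{-1}B)$. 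Hence $\det(M)=\det(A)\det(D-CA^{-1}B)=\det\!\bigl(A(D-CA^{-1}B)\bigr)=\det(AD-ACA^{-1}B)$.

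It is exactly here that the hypothesis $AC=CA$ enters: commutativity gives $ACA^{-1}=CAA^{-1}=C$, and therefore $\det(M)=\det(AD-CB)$, which is the claimed identity in the invertible case.

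For the general case, where $A$ is not assumed invertible, I would perturb $A$ to $A_t:=A+tI$ for $t\in\mathbb{C}$ and set $M_t:=\begin{pmatrix} A_t & B\\ C & D\end{pmatrix}$. Since $A$ has only finitely many eigenvalues, $A_t$ is invertible for all but finitely many values of $t$, and $A_t$ still commutes with $C$ because $A$ does and $I$ is central; applying the invertible case gives $\det(M_t)=\det(A_tD-CB)$ for all such $t$. Both sides are polynomial functions of $t$ (the entries of $M_t$ and of $A_tD-CB$ are polynomials in $t$, and the determinant is a polynomial in the entries), and they agree at infinitely many points, hence they agree identically; evaluating at $t=0$ yields $\det(M)=\det(AD-CB)$.

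The only point requiring any care — and the step I would write out most explicitly — is the perturbation argument: one must note that $A_tC=CA_t$ for every $t$, and that the set of $t$ with $A_t$ singular is finite (it consists of the negatives of the eigenvalues of $A$), so that the polynomial identity theorem applies. Everything else is the routine Schur-complement computation above, with the commutativity used precisely once, to replace $ACA^{-1}$ by $C$.
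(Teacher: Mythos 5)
Your proof is correct. Note that the paper does not prove this lemma at all: it is quoted from Silvester's article \cite{4}, so there is no internal argument to compare against. Your two-step argument (Schur-complement factorization when $A$ is invertible, using $AC=CA$ exactly once to replace $ACA^{-1}$ by $C$, then the perturbation $A_t=A+tI$ together with the fact that two polynomials in $t$ agreeing at all but finitely many points coincide) is the standard route and is essentially the proof given in the cited reference, except that Silvester avoids the analytic-looking perturbation by multiplying $M$ on the right by a block matrix such as $\begin{psmallmatrix} I & 0\\ -A^{-1}C\, \cdot & I\end{psmallmatrix}$-type factors or, in the singular case, by working over the polynomial ring in an indeterminate $t$ — which is the same idea as your $A+tI$ trick, just phrased algebraically so that it works over any commutative ring rather than only over an infinite field like $\CE$. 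Your version is complete and correct for the complex matrices considered in this paper; the only hypotheses you silently use (blocks square and of equal size, the base field infinite) are satisfied here.
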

\section{Main results}
\subsection{Symmetric norm inequality}
Hereafter our block matrices are such their   diagonal blocks are of equal size.
\begin{lemma}\label{r} \cite{1} Let $M=\begin{bmatrix} A & X\\ {X^*} & B\end{bmatrix} \in {\mathbb{M}}_{2n}^+$,  if $X$ is Hermitian or Skew-Hermitian then  \begin{equation}\label{pkm}\|M\|\le \|A+B\|\end{equation} for all symmetric norms. \end{lemma}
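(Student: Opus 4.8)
The plan is to reduce the inequality directly to one of the two decomposition lemmas already at hand. First I would observe that if $X$ is skew-Hermitian, then $X^* = -X$, so $\mathrm{Im}(X) = \frac{X - X^*}{2i} = \frac{X}{i}$ is Hermitian, while $\mathrm{Re}(X) = \frac{X+X^*}{2} = 0$; symmetrically, if $X$ is Hermitian then $\mathrm{Re}(X) = X$ and $\mathrm{Im}(X) = 0$. So in the Hermitian case I apply \lref{lk}, and in the skew-Hermitian case I apply \lref{la}; the two cases are entirely parallel, so I would write out only one.

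Take $X$ Hermitian. By \lref{lk},
\[
M = U\begin{pmatrix} \frac{A+B}{2}+\mathrm{Re}(X) & 0\\ 0 & 0 \end{pmatrix}U^* + V\begin{pmatrix} 0 & 0\\ 0 & \frac{A+B}{2}-\mathrm{Re}(X) \end{pmatrix}V^*
\]
for some unitaries $U,V$. By the triangle inequality for the symmetric norm, together with unitary invariance,
\[
\|M\| \le \left\| \frac{A+B}{2}+\mathrm{Re}(X) \right\| + \left\| \frac{A+B}{2}-\mathrm{Re}(X) \right\|.
\]
Now the key point is that both matrices appearing on the right are positive semi-definite: indeed, by \rref{pl}, $A+B \ge \pm(X+X^*) = \pm 2\,\mathrm{Re}(X)$, hence $\frac{A+B}{2} \pm \mathrm{Re}(X) \ge 0$. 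For positive semi-definite matrices $P, Q$ one has $\|P\| + \|Q\| = \|P \oplus Q\|$... more to the point, I want $\|P\|+\|Q\| \le \|P+Q\|$ when $P,Q \ge 0$, which holds for every symmetric (unitarily invariant) norm because the singular values of $P+Q$ majorize, after rearrangement, the combined list — concretely, $\mathrm{tr}$-type and Ky Fan comparisons give $\|P\| + \|Q\| = \|P+Q\|$ already fails, so instead I use the standard fact that for $P,Q\ge 0$, $\|P+Q\|\ge \|P\|$ and the sum form: the cleanest route is to note $\left\|\frac{A+B}{2}+\mathrm{Re}(X)\right\| + \left\|\frac{A+B}{2}-\mathrm{Re}(X)\right\| \le \left\|(A+B) \right\|$ because the two summands are positive and their sum is exactly $A+B$, and for positive semi-definite $P,Q$ we have $\|P\|+\|Q\|\le \|P+Q\|$ for all symmetric norms (a consequence of Ky Fan dominance: $\|P\|_k + \|Q\|_k \le \|P+Q\|_k$ for each $k$ since eigenvalues of sums of PSD matrices dominate). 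Combining, $\|M\| \le \|A+B\|$.

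The main obstacle, and the step to state carefully, is the passage $\|P\|+\|Q\|\le\|P+Q\|$ for $P,Q\ge 0$ in an arbitrary symmetric norm; I would justify it via the Ky Fan $k$-norm characterization of symmetric norms, using that $\lambda^{\downarrow}(P+Q)$ weakly majorizes the interleaved/combined eigenvalue data of $P$ and $Q$ so that each partial sum comparison holds. Everything else — the case split, the triangle inequality, unitary invariance, and the positivity from \rref{pl} — is routine. For the skew-Hermitian case one repeats the argument verbatim with \lref{la} and $\mathrm{Im}(X)$ in place of \lref{lk} and $\mathrm{Re}(X)$, and with the inequalities $A+B \ge \pm \frac{X-X^*}{i}$ from \rref{pl}.
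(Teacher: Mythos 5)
Your reduction is set up with the two decomposition lemmas paired the wrong way round, and this forces you into a step that is simply false. You correctly note that for Hermitian $X$ one has $\mathrm{Re}(X)=X$ and $\mathrm{Im}(X)=0$, but you then apply \lref{lk}, keeping the nonzero part, and end up needing $\|P\|+\|Q\|\le\|P+Q\|$ for positive semi-definite $P=\frac{A+B}{2}+\mathrm{Re}(X)$, $Q=\frac{A+B}{2}-\mathrm{Re}(X)$. That inequality does not hold for general symmetric norms: for the spectral norm (Ky Fan $1$-norm) take $P=\mathrm{diag}(1,0)$, $Q=\mathrm{diag}(0,1)$, where $\|P\|_s+\|Q\|_s=2>1=\|P+Q\|_s$. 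For Ky Fan $k$-norms with $k<n$ the valid inequality is the triangle inequality $\|P+Q\|_k\le\|P\|_k+\|Q\|_k$, i.e.\ the opposite direction of what you invoke; "Ky Fan dominance" gives no such reverse bound. (Indeed, if $\|P\|+\|Q\|\le\|P+Q\|$ were true for all PSD summands, then $\|M\|\le\|A+B\|$ would follow for \emph{every} PSD block matrix, contradicting the paper's Example 2.10.)

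The correct route, which is the one behind the citation and is exactly the $r=0$ instance of the argument given for \tref{K}, is to pair each case with the decomposition in which the off-diagonal contribution \emph{vanishes}: for Hermitian $X$ use \lref{la}, since $\mathrm{Im}(X)=0$ turns both blocks into $\frac{A+B}{2}$, so that $M=U\bigl(\frac{A+B}{2}\oplus 0\bigr)U^*+V\bigl(0\oplus\frac{A+B}{2}\bigr)V^*$ and the triangle inequality, unitary invariance, the zero-padding convention, and homogeneity give $\|M\|\le 2\,\bigl\|\tfrac{A+B}{2}\bigr\|=\|A+B\|$ with no further lemma needed; for skew-Hermitian $X$ use \lref{lk}, since then $\mathrm{Re}(X)=0$, and repeat verbatim. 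Your positivity observations via \rref{pl} are fine but become unnecessary once the lemmas are paired correctly; as written, the proof does not close.
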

The fact is that there exist P.S.D. matrices with non Hermitian or Skew-Hermitian off-diagonal blocks satisfying \eqref{pkm}.
\begin{definition}A block matrix $N=\begin{bmatrix} A & X\\ {X^*} & B\end{bmatrix} $ is said to be a Hermitio matrix  if it is unitarly congruent to a matrix $M=\begin{bmatrix} A & Y\\ {Y} & B\end{bmatrix}$ with Hermitian  off diagonal blocks or to $M=\begin{bmatrix} A & Y\\ {-Y} & B\end{bmatrix}$ with  Skew-hermitian off diagonal blocks.
\end{definition}
Clearly if $N$ is P.S.D. it satisfies $\|N\|\le \|A+B\|$ for all symmetric norms (by \lref{r}).
\begin{proposition}
Let    $M=\begin{bmatrix}A&X\\X^*&B    \end{bmatrix}\in {\mathbb{M}}_{2n}^+$ be  a given positive semi-definite matrix. If  $X^*$ commute with $A$, or   $X$ commute with $B$, then  $\text{           }  \|M\| \le \|A+B\| \text{         }$ for all symmetric norms.
\end{proposition}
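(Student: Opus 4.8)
The plan is to reduce the statement, by two elementary moves, to \lref{r}.

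First I would remove the asymmetry between the two hypotheses. If $X^*$ commutes with $A$, then taking adjoints and using $A^*=A$ gives $AX=XA$ as well. If instead $X$ commutes with $B$ (equivalently $X^*B=BX^*$), I would conjugate $M$ by the flip unitary $J=\begin{bmatrix}0&I\\ I&0\end{bmatrix}$: then $JMJ=\begin{bmatrix}B&X^*\\ X&A\end{bmatrix}$ is again positive semi-definite, takes the same value under every symmetric norm as $M$, has diagonal blocks summing to $B+A=A+B$, and is of the first type with $(A,X,B)$ replaced by $(B,X^*,A)$. So it suffices to prove the statement for $M=\begin{bmatrix}A&X\\ X^*&B\end{bmatrix}\in{\mathbb{M}}_{2n}^+$ with $AX=XA$.

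The core step is to rotate $X$ into a Hermitian matrix by a unitary that leaves $A$ alone. Decompose $\mathbb{C}^n=\bigoplus_k E_k$ into the eigenspaces of the Hermitian matrix $A$; since $X$ commutes with $A$, it preserves each $E_k$, so $X=\bigoplus_k X_k$ with $X_k\in{\mathbb{M}}(E_k)$. Picking a polar decomposition $X_k=U_k|X_k|$ with $U_k$ unitary on $E_k$ (possible for any square matrix) and setting $U=\bigoplus_k U_k$, the unitary $U$ commutes with $A$ because $A$ acts as a scalar on each $E_k$; hence $U^*AU=A$, while $U^*X=\bigoplus_k|X_k|=:P\ge 0$ is in particular Hermitian. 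Consequently
\[
\begin{bmatrix}U^*&0\\ 0&I\end{bmatrix}M\begin{bmatrix}U&0\\ 0&I\end{bmatrix}=\begin{bmatrix}A&P\\ P&B\end{bmatrix},
\]
which, being unitarily congruent to $M$, lies in ${\mathbb{M}}_{2n}^+$ and has Hermitian off-diagonal block $P$. Now \lref{r} yields $\bigl\|\begin{bmatrix}A&P\\ P&B\end{bmatrix}\bigr\|\le\|A+B\|$ for every symmetric norm, and unitary invariance identifies the left-hand side with $\|M\|$, which completes the argument.

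The only delicate point is the choice of $U$: it must at once fix $A$ under conjugation and turn $X$ into a Hermitian matrix. Working over the eigenspaces of $A$, where $A$ is a scalar and therefore commutes with every unitary, is exactly what lets an ordinary blockwise polar decomposition achieve both things simultaneously; the rest (the flip reduction and the fact that a direct sum of positive square roots is Hermitian) is bookkeeping.
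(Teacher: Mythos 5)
Your proof is correct and follows essentially the same route as the paper: a unitary congruence by $\diag(U^*,I_n)$ that turns the off-diagonal block into a positive (hence Hermitian) matrix, followed by an appeal to \lref{r}. In fact your eigenspace-wise construction of $U$ is more careful than the paper's argument, which infers $AU^*=U^*A$ directly from the commutation of $X$ and $|X|$ with $A$ --- a step that needs exactly your kind of justification when $X$ is singular, since the polar unitary is then not unique and must be chosen suitably; your flip-unitary reduction of the case $XB=BX$ likewise makes explicit what the paper dismisses as ``the other case is similar.''
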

\begin{proof}
We will assume  without loss of generality that $X^*$ commute with $A,$ as the other case is similar. We show that such $M$ is a Hermitio matrix. Take the right  polar decomposition of $X^*$  so  $X=U|X|$ and $X^*=|X|U^*.$ Since   $U^*$ is unitary  and $X^*$ commute with $A,$  $X$ and $|X|$ commute with $A$ thus $AU^*=U^*A.$ If $I_n$ is the identity matrix of order $n$, a direct computation shows that $$\begin{bmatrix} U^*& 0\\ {0} & I_n\end{bmatrix}  \begin{bmatrix} A & X\\ {X^*} & B\end{bmatrix}\begin{bmatrix} U & 0\\ {0} & I_n\end{bmatrix}  =\begin{bmatrix} A & |X|\\ {|X|} & B\end{bmatrix}   ,$$
consequently by \lref{r}, $\|M\|\le \|A+B\|$ for all symmetric norms and that completes the proof.
\end{proof}

\begin{remark}It is easily seen that if $X$ commute with the Hermitian matrix $A$ so is $X^*$ and conversely.
\end{remark}
Let $M_1=\dfrac{A+B}{2}+Im(X),$    $M_2=\dfrac{A+B}{2}-Im(X),$   $N_1=\dfrac{A+B}{2}+Re(X)$  and   $N_2=\dfrac{A+B}{2}-Re(X).$   
The following is  a slight generalization  of \lref{r} unless one proves that this is a case of a Hermitio matrix.
\begin{theorem}\label{K}Let $M= \begin{bmatrix} A & X\\ {X^*} & B\end{bmatrix}$ be  a positive semi definite matrix,  if  $Im(X)=rI_n$ or $Re(X)=rI_n$  for some $r,$ then $\|M\|\le \|A+B\|$ for all symmetric norms.
\end{theorem}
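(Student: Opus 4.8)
The plan is to prove everything norm by norm and reduce to the Ky Fan $k$-norms $\|\cdot\|_k$, where the scalar shift $rI_n$ cancels in a way that a crude triangle estimate cannot see. I carry out the case $Im(X)=rI_n$; the case $Re(X)=rI_n$ is word for word the same after replacing \lref{la} by \lref{lk} and $M_1,M_2$ by $N_1,N_2$.

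First I would apply \lref{la} to write $M=U(M_1\oplus 0)U^*+V(0\oplus M_2)V^*$ for some unitaries $U,V\in\mathbb{M}_{2n}$, with $M_1=\frac{A+B}{2}+rI_n$ and $M_2=\frac{A+B}{2}-rI_n$. From \rref{pl} together with $Im(X)=rI_n$ one gets $A+B\ge 2rI_n$ and $A+B\ge -2rI_n$, hence $A+B\ge 2|r|I_n\ge 0$; in particular $M_1,M_2\ge 0$. Writing $\lambda_1\ge\cdots\ge\lambda_n$ for the eigenvalues of $A+B$ (so $\lambda_n\ge 2|r|$) and diagonalising $A+B$, the matrices $M_1$ and $M_2$ become diagonal in the same basis, with eigenvalues $\frac{\lambda_i}{2}+r$ and $\frac{\lambda_i}{2}-r$; these numbers are nonnegative and already decreasing in $i$, so they are exactly the singular values of $M_1,M_2$, whence $\mu_i(M_1)+\mu_i(M_2)=\lambda_i$ for every $i$.

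Next, fixing $k\ge 1$ and using that padding with a zero block leaves a symmetric norm unchanged, the triangle inequality for the Ky Fan $k$-norm gives $\|M\|_k\le\|M_1\oplus 0\|_k+\|0\oplus M_2\|_k=\|M_1\|_k+\|M_2\|_k$. For $k\le n$ the right-hand side equals $\sum_{i=1}^k\bigl(\frac{\lambda_i}{2}+r\bigr)+\sum_{i=1}^k\bigl(\frac{\lambda_i}{2}-r\bigr)=\sum_{i=1}^k\lambda_i=\|A+B\|_k$, the terms in $r$ cancelling. For $k\ge n$ one has $\|M\|_k\le \mathrm{tr}\,M=\mathrm{tr}(A+B)=\|A+B\|_k$, since $A+B$ has rank at most $n$. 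Thus $\|M\|_k\le\|A+B\|_k$ for all $k$, and by Ky Fan dominance $\|M\|\le\|A+B\|$ for every symmetric norm.

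The step I expect to be delicate — and the only place where the hypothesis is really used — is that one must not stop at the estimate $\|M\|\le\|M_1\|+\|M_2\|$ that \lref{la} hands out directly: it is genuinely too weak (already for the Frobenius norm, with $n=2$, $r>0$ and distinct eigenvalues of $A+B$, one has $\|M_1\|+\|M_2\|>\|A+B\|$). One has to descend to the Ky Fan norms first, because there $\|M_1\|_k+\|M_2\|_k$ pairs the eigenvalues of $M_1$ and of $M_2$ at the same index, which is precisely where the shifts $\pm r$ annihilate one another; this is also why the argument collapses for a general Hermitian $Im(X)$, and why the statement is confined to a scalar imaginary (or real) part.
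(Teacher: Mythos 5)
Your argument is correct and follows essentially the same route as the paper: it uses the decomposition of \lref{la} (resp.\ \lref{lk}), the positivity of $M_1,M_2$ from \rref{pl}, the exact cancellation of the scalar shift $\pm r$ in the Ky Fan $k$-norms, and then Ky Fan dominance. Your explicit simultaneous diagonalisation of $M_1,M_2$ with $A+B$ and your treatment of the indices $k>n$ merely spell out details the paper leaves implicit.
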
\begin{proof}Let ${\sigma}_i (H)$ denote the singular values of a matrix $H$ ordered in decreasing order,  by \rref{pl}   the matrices    $M_1=\dfrac{A+B}{2}+Im(X)$ and    $M_2=\dfrac{A+B}{2}-Im(X)$     are positive semi definite since $Im(X)=rI_n$ we have:  $$\sum_{i=1}^k{\sigma}_i\left(\dfrac{A+B}{2}+Im(X)\right)+\sum_{i=1}^k{\sigma }_i\left(\dfrac{A+B}{2}-Im(X)\right)= \sum_{i=1}^k{\sigma }_i(A+B).      $$
In other words by \lref{la} $\|M\|_k \le \|M_1\|_k+\|M_2\|_k= \|A+B\|_k$ for all Ky-Fan $k-$norms which completes the proof, using \lref{lk} the other case is similarly proven.
\end{proof}\begin{theorem}Let $M= \begin{bmatrix} A & X\\ {X^*} & B\end{bmatrix}$ be  a positive semi definite matrix,  then $\|M\|\le 2\|A+B\|$ for all symmetric norms. Furthermore if $M_1$  or $M_2$ are positive definite  then the large inequality is a strict one. 
\end{theorem}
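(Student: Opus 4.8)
The plan is to read off the bound $\|M\|\le 2\|A+B\|$ from the decomposition of \lref{la}, and to extract the strict case from a strict monotonicity property of symmetric norms on the positive cone. For the inequality, \lref{la} lets us write $M=U(M_1\oplus 0)U^{*}+V(0\oplus M_2)V^{*}$ for some unitaries $U,V\in\mathbb{M}_{2n}$, where $M_1=\frac{A+B}{2}+Im(X)$ and $M_2=\frac{A+B}{2}-Im(X)$, and by \rref{pl} both $M_1$ and $M_2$ are positive semi-definite. Since symmetric norms are unitarily invariant, satisfy the triangle inequality, and assign to a block-diagonal matrix with a single nonzero block the norm of that block, this gives $\|M\|\le\|M_1\|+\|M_2\|$. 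As $M_1+M_2=A+B$ with $M_1,M_2\ge 0$, we have $0\le M_i\le A+B$, and monotonicity of symmetric norms on positive semi-definite matrices (if $0\le P\le Q$ then $\|P\|\le\|Q\|$) yields $\|M_1\|,\|M_2\|\le\|A+B\|$, hence $\|M\|\le 2\|A+B\|$.

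The strict case rests on the following strict monotonicity statement: if $0\le P\le Q$ and $Q-P$ is positive definite, then $\|P\|<\|Q\|$ for every symmetric norm. To prove it, set $c=\lambda_{\min}(Q-P)>0$; then $0\le P+cI\le Q$, so $\|Q\|\ge\|P+cI\|$ and it suffices to show $\|P+cI\|>\|P\|$. Let $\Phi$ be the symmetric gauge function of the norm and $\mu_1\ge\cdots\ge\mu_n\ge 0$ the eigenvalues of $P$, so that $\|P\|=\Phi(\mu)$ and $\|P+cI\|=\Phi(\mu+c\mathbf{1})$, where $\mathbf{1}=(1,\dots,1)$. Put $L=\mu_1+c=\max_i(\mu_i+c)$; one checks the coordinatewise inequality $0\le\mu\le\bigl(1-\tfrac{c}{L}\bigr)(\mu+c\mathbf{1})$, and applying $\Phi$, which is monotone and positively homogeneous, gives $\|P\|=\Phi(\mu)\le\bigl(1-\tfrac{c}{L}\bigr)\Phi(\mu+c\mathbf{1})=\bigl(1-\tfrac{c}{L}\bigr)\|P+cI\|$. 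Since $0<c\le L$ forces $1-\tfrac{c}{L}<1$ while $\|P+cI\|\ge\Phi(c\mathbf{1})=c\,\Phi(\mathbf{1})>0$, we conclude $\|P\|<\|P+cI\|\le\|Q\|$.

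Now if $M_1$ is positive definite, apply this lemma with $P=M_2$ and $Q=A+B$ (so $Q-P=M_1>0$) to get $\|M_2\|<\|A+B\|$; together with $\|M_1\|\le\|A+B\|$ and $\|M\|\le\|M_1\|+\|M_2\|$ this gives $\|M\|<2\|A+B\|$. If instead $M_2$ is positive definite, take $P=M_1$, $Q=A+B$ (so $Q-P=M_2>0$), obtain $\|M_1\|<\|A+B\|$, and conclude in the same way. The one genuine obstacle is the strict monotonicity lemma: symmetric norms are not strictly monotone in general (the spectral norm, for instance, sees only the largest singular value), so strictness cannot come from the mere relation $M_2<A+B$; it has to be squeezed out of the positive definiteness of $Q-P$, i.e.\ from the uniform gap $c$ between corresponding eigenvalues, and the homogeneity/contraction estimate above is precisely the device that converts that gap into a strict norm inequality. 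Everything else is a routine assembly of \lref{la}, \rref{pl}, and the standard properties of symmetric norms.
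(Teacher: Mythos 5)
Your proof is correct. For the weak inequality you and the paper do essentially the same thing: both start from the decomposition of \lref{la} together with the positivity of $M_1,M_2$ coming from \rref{pl}, and bound $\|M\|\le\|M_1\|+\|M_2\|\le 2\|A+B\|$; the paper runs the last step through the Ky Fan norms ($\|M\|_k\le\|M_1\|_k+\|M_2\|_k\le 2\|A+B\|_k$ for every $k$, then Fan dominance), while you use monotonicity of the symmetric norm on the positive cone directly, since $0\le M_i\le A+B$ --- a cosmetic difference. Where you genuinely diverge is the strict case. The paper only remarks that it is ``easily seen'' that $\|M_1\|_k+\|M_2\|_k<2\|A+B\|_k$ when $M_1$ or $M_2$ is positive definite; this is indeed easy for Ky Fan norms (Weyl monotonicity gives $\lambda_i(M_2)<\lambda_i(A+B)$ once $A+B-M_2=M_1>0$), but passing from strict Ky Fan inequalities to a strict inequality for an \emph{arbitrary} symmetric norm is not automatic (symmetric norms need not be strictly monotone, as you note for the spectral norm's blindness to all but the top singular value), and the paper leaves that transfer implicit. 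Your strict monotonicity lemma --- if $0\le P\le Q$ and $Q-P>0$ then $\|P\|<\|Q\|$, proved by replacing $Q$ with $P+cI$, $c=\lambda_{\min}(Q-P)$, and the coordinatewise contraction $\mu\le\bigl(1-\tfrac{c}{L}\bigr)(\mu+c\mathbf{1})$ fed into the gauge function --- fills exactly that gap and yields a self-contained, fully rigorous strictness argument for every symmetric norm, at the modest cost of invoking the gauge-function description. (The paper's route can also be repaired: strict Ky Fan dominance at every $k$ does transfer to all symmetric norms by writing the gauge, via Abel summation over a dual maximizer, as a nonnegative combination of the partial sums, but that argument is no shorter than yours.)
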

\begin{proof}The proof is very close to that of the previous theorem since $M_1,$    $M_2$   are two  positive semi definite matrices we  have $\|M_1\|_k\le \|A+B\|_k$ and $\|M_2\|_k \le \|A+B\|_k$ for all $k\le n,$ thus we derive    the following inequality: $$   \|M\|_k \le \|M_1\|_k+\|M_2\|_k \le  2\|A+B\|_k  $$ for all Ky-Fan $k-$norms. It is easily seen that if $M_1$ or $M_2$ are P.D. then $ \|M_1\|_k+\|M_2\|_k <  2\|A+B\|_k  $
\end{proof}
\begin{remark}Note  by the decompositions in  \lref{la} and \lref{lk} if $M>0$  then  all of $M_1,$ $N_1$,   $M_2$ and $N_2$ are positive definite.
\end{remark}

Here's a counter example: 
\begin{example}
Let $$T=\begin{bmatrix}\dfrac{3}{10}&0&0&\dfrac{i}{11}\\0&5&i&0\\0&-i&5&0\\\dfrac{-i}{11}&0&0&\dfrac{3}{10}\end{bmatrix}=\begin{bmatrix}A&X\\X^*&B\end{bmatrix}$$
where $X=\begin{bmatrix}0&\frac{i}{11}\\i& 0   \end{bmatrix}$, $A=\begin{bmatrix}\frac{3}{10}&0\\0&5    \end{bmatrix}$ and $B=\begin{bmatrix}5&0\\0&\frac{3}{10} \end{bmatrix}.$                      The eigenvalues of $T$ are the positive numbers: $\displaystyle{\lambda}_1=6,\text{   } {\lambda}_2=4,$ ${\lambda}_3\approx~0.39,$ $ \text{ }{\lambda}_4\approx 0.21,$ $T\ge 0,$  but $
6={\|T\|}_{s} >{\|A+B\|}_s=\frac{53}{10} .$\end{example} 
\begin{lemma}\label{plp}
Let $$N=\begin{bmatrix}\begin{psmallmatrix}a_{1} & 0 &\cdots & 0\\ 0 & a_{2} & \cdots & 0\\ \vdots & \vdots & \ddots &\vdots \\
0& 0 & \cdots & a_{n} \end{psmallmatrix}&D\\D^*&\begin{psmallmatrix}b_{1} & 0 &\cdots & 0\\ 0 & b_{2} & \cdots & 0\\ \vdots & \vdots & \ddots &\vdots \\
0& 0 & \cdots & b_{n} \end{psmallmatrix}\end{bmatrix}$$ where $a_1,\cdots, a_{n}$ respectively $b_{1},\cdots, b_{n}$ are  nonnegative respectively  negative real numbers, $A=\begin{psmallmatrix}a_{1} & 0 &\cdots & 0\\ 0 & a_{2} & \cdots & 0\\ \vdots & \vdots & \ddots &\vdots \\0& 0 & \cdots & a_{n} \end{psmallmatrix},$ $B=\begin{psmallmatrix}b_{1} & 0 &\cdots & 0\\ 0 & b_{2} & \cdots & 0\\ \vdots & \vdots & \ddots &\vdots \\
0& 0 & \cdots & b_{n} \end{psmallmatrix}$ and $D$ is any diagonal matrix, then nor $N$  neither $-N$ is  positive semi-definite. Set  $(d_1,\cdots,d_n)$ as the diagonal entries of $D^*D, $ if $a_{i}+b_{i}\ge 0$ and  $a_ib_i-d_i < 0$ for all $i\le n,$              then  $\|N\|>\|A+B\|.$ for all symmetric norms
\end{lemma}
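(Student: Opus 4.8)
The plan is to exploit the diagonal structure of all three blocks. A single permutation similarity turns $N$ into the direct sum $\bigoplus_{i=1}^{n} N_i$ of $2\times 2$ Hermitian blocks $N_i=\begin{psmallmatrix} a_i & \delta_i\\ \overline{\delta_i} & b_i\end{psmallmatrix}$, where $\delta_i$ is the $i$-th diagonal entry of $D$, so that $d_i=|\delta_i|^{2}$. Then $\det N_i=a_ib_i-d_i$, and since $a_i\ge 0$ and $b_i<0$ we get $\det N_i\le -d_i\le 0$; hence $N$ is not positive semi-definite (it has the negative diagonal entry $b_i$), and $-N$ is not positive semi-definite either, the only exception being the trivial case $A=0,\ D=0$ (where $-N=0_n\oplus(-B)\ge 0$), which I exclude — and which is in any case precluded by either further hypothesis, since $a_i+b_i\ge 0$ together with $b_i<0$ already forces $a_i>0$ and hence $\det N_i<0$.

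For the norm inequality, the eigenvalues of $N_i$ are $\lambda_i^{\pm}=\tfrac{a_i+b_i}{2}\pm\sqrt{\big(\tfrac{a_i-b_i}{2}\big)^{2}+d_i}$, with $\lambda_i^{+}>0>\lambda_i^{-}$, so the singular values of $N_i$ are $\lambda_i^{+}$ and $-\lambda_i^{-}$. The key elementary estimate I would isolate is
$$\lambda_i^{+}=\frac{a_i+b_i}{2}+\sqrt{\Big(\frac{a_i-b_i}{2}\Big)^{2}+d_i}\ \ge\ \frac{a_i+b_i}{2}+\frac{a_i-b_i}{2}\ =\ a_i\ >\ a_i+b_i\ \ge\ 0,$$
valid because $a_i\ge 0>b_i$ (so the radical is at least $|a_i-b_i|/2=(a_i-b_i)/2$, and $-b_i>0$). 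Consequently, among the $2n$ singular values of $N$ the $n$ numbers $\lambda_1^{+},\cdots,\lambda_n^{+}$ strictly exceed, index for index, the $n$ singular values $a_1+b_1,\cdots,a_n+b_n$ of $A+B$ (these being $\ge 0$ by hypothesis), while the remaining $n$ singular values $-\lambda_i^{-}$ of $N$ are $\ge 0$.

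To conclude I would use the convention from the introduction that a symmetric norm on $\mathbb{M}_{2n}$ restricts to one on $\mathbb{M}_n$ with $\|C\|=\|C\oplus 0\|$, so that $\|C\|=\Phi(\text{singular values of }C)$ for the associated symmetric gauge function $\Phi$, which is permutation-invariant and monotone. Comparing $N$ with $(A+B)\oplus 0_n$: the singular-value vector $v=(\lambda_1^{+},\cdots,\lambda_n^{+},-\lambda_1^{-},\cdots,-\lambda_n^{-})$ of $N$ dominates entrywise the singular-value vector $w=(a_1+b_1,\cdots,a_n+b_n,0,\cdots,0)$ of $(A+B)\oplus 0_n$, so $\|N\|=\Phi(v)\ge\Phi(w)=\|A+B\|$. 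For strictness: if $A+B=0$ then $\|A+B\|=0<\|N\|$ since $N\ne 0$; otherwise put $t=\min\{v_i/w_i:\ w_i>0\}$, which exceeds $1$ because each such ratio equals $\lambda_i^{+}/(a_i+b_i)>1$. Then $tw\le v$ entrywise, so $\|N\|=\Phi(v)\ge\Phi(tw)=t\,\Phi(w)=t\|A+B\|>\|A+B\|$.

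I expect the one genuinely delicate point to be this last step: upgrading ``$v$ dominates $w$ coordinatewise'' to a \emph{strict} inequality for \emph{every} symmetric norm, not merely for the Ky Fan norms — Fan dominance alone yields only ``$\ge$'', and a norm such as $\|\cdot\|_s$ ignores all but the largest singular value. The scaling device above (rescaling $w$ by $t>1$ and using homogeneity and monotonicity of $\Phi$) is what makes the strictness uniform; the care needed is only in verifying $t>1$, which is precisely where the strict inequalities $\lambda_i^{+}>a_i+b_i$ (and hence, implicitly, $a_ib_i-d_i<0$) are used, and in handling the vanishing coordinates of $w$, for which $0\le v_i$ suffices.
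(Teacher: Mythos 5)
Your proof is correct, and at its core it arrives at the same quadratics as the paper, but by a different and in places more careful route. The paper keeps $N$ in block form and invokes the Silvester block-determinant lemma (\lref{sinj}), using that the diagonal blocks commute, to factor the characteristic polynomial into the $n$ equations $(a_i-\mu)(b_i-\mu)-d_i=0$; it then argues via Vieta's relations ($x_i+y_i=a_i+b_i\ge 0$, $x_iy_i=a_ib_i-d_i<0$) that each pair consists of one positive root exceeding $a_i+b_i$ and one negative root, and concludes by comparing Ky Fan norms. Your permutation similarity onto the direct sum of the $2\times 2$ Hermitian blocks $N_i$ yields exactly the same eigenvalue equations without appealing to the determinant lemma, and your explicit formula for $\lambda_i^{\pm}$ gives the same key comparison $\lambda_i^{+}>a_i+b_i\ge 0$. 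Where your write-up genuinely adds something is the final step: the paper passes from strict Ky Fan inequalities $\|N\|_k>\|A+B\|_k$ to strictness for \emph{all} symmetric norms without justification, whereas Fan dominance by itself only delivers the weak inequality; your scaling device ($tw\le v$ entrywise with $t>1$, together with homogeneity and monotonicity of the symmetric gauge function, and the separate case $A+B=0$) supplies exactly the missing argument. You also rightly flag the degenerate case $A=0$, $D=0$, in which $-N$ \emph{is} positive semi-definite; the paper's justification that ``the diagonal of $N$ has negative and positive numbers'' tacitly assumes some $a_i>0$, and your observation that the hypotheses $a_i+b_i\ge 0$, $b_i<0$ force $a_i>0$ is the clean way to dispose of this.
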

\begin{proof}The diagonal  of $N$ has  negative and   positive  numbers, thus nor $N$  neither $-N$ is  positive semi-definite, now any two diagonal matrices will commute, in particular $D^*$ and $A,$ by applying \tref{sinj} we get that  the eigenvalues of $N$ are the roots of $$\det((A-{\mu}I_{n})(B-{\mu}I_{n})-D^*D)=0$$
Equivalently the eigenvalues are all the  solutions  of the $n$ equations:$$ \begin{array}{rlrcl}1)&&({a}_1-{\mu})({b}_1-{\mu})-d_{1}&=&0
\\[0.1cm]2)& &({a}_2-{\mu})({b}_2-{\mu})-d_{2}&=&0
\\[0.1cm]3)&&\text{  }({a}_3-{\mu})({b}_3-{\mu})-d_{3}&=&0
\\\vdots{\text{ }}&&&\vdots&\\
i)&&({a}_i-{\mu})({b}_i-{\mu})-d_{n}&=&0\\\vdots\text{  }&&&\vdots&\\[0.1cm]
n)&&({a}_n-{\mu})({b}_n-{\mu})-d_{n}&=&0
\end{array}\indent (S)\\$$
 Let us denote by $x_i$ and $y_i$  the two solutions of the {$i^{th}$} equation then:
$$\begin{array}{rclcl}x_1+y_1&=&a_1+b_1&\ge&0\\[0.1cm]
x_2+y_2&=&a_2+b_2&\ge& 0\\ &\vdots&&\vdots&\\[0.1cm]
x_n+y_n&=&a_n+b_n&\ge &0\\[0.1cm]
\end{array}\indent \begin{array}{rclcl}x_1y_1&=&a_1b_1-d_1&<&0\\[0.1cm]
x_2y_2&=&a_2b_2-d_2&<& 0\\ &\vdots& &\vdots&\\[0.1cm]
x_ny_n&=&a_nb_n-d_n&< &0\\[0.1cm]
\end{array}. $$
This implies that each equation of $(S)$ has one negative  and one positive solution,  their sum is positive,  thus the positive root is bigger or equal than the negative one. Since  $A+B=\begin{psmallmatrix}a_{1}+b_{1} &\cdots & 0\\ \vdots &  \ddots &\vdots \\
0&  \cdots &a_{n}+ b_{n} \end{psmallmatrix},$  summing over  indexes we see that $\|N\|_k>\|A+B\|_k$ for $k=1,\cdots,n$ which yields to   $\|N\|>\|A+B\|$ for all symmetric norms.
\end{proof}
It seems easy to construct examples of  non P.S.D matrices $N$, such that $\|N\|_{s}>\|A+B\|_{s},$ let us have a  look of such inequality for P.S.D. matrices.
\begin{example}
Let $$N_y=\begin{bmatrix}  2& 0&0&2\\0&y&0&0\\0& 0& 1&0\\2&0& 0&{2}\end{bmatrix}=\begin{bmatrix}A&X\\X^*&B\end{bmatrix}$$ where  $A=\begin{bmatrix}2 &0\\ 0&y\end{bmatrix}$ and    $B=\begin{bmatrix}1&0\\0&2\end{bmatrix}.$   The eigenvalues of $N_y$ are the numbers: $\displaystyle{\lambda}_1=4,\text{   } {\lambda}_2=1,\text{       }{\lambda}_3=y, \text{   }{\lambda}_4=0,$ thus if $y\ge 0,$ $N_y$ is positive semi-definite  and for all $ y$ such that   $0\le y < 1$ we have
\begin{enumerate}
\item $4={\|N_y\|}_s>{\|A+B\|}_s=3.$
\item $ 16 +y^2+1={\| N\|}_{(2)}^2>{\|A+B\|}_{(2)}^2=4(3+y)+y^2+1.$
\end{enumerate}
\end{example}
\bibliographystyle{amsplain}

\end{document}